\newcommand{\R}{{\mathbb R}}
\newcommand{\C}{{\mathbb C}}
\newcommand{\dsumab}{\sum_{k_x,k_y=-n}^{n-1}} 
\newcommand{\dsumq}{\sum_{q_x,q_y=-n}^{n-1}}
\newtheorem{thm}{Theorem}[section]
\newtheorem{lem}[thm]{Lemma}
\title{Nonlinear Analysis of the Solutions of the Hasegawa--Wakatani Equations}
\author{Linda Stals
\thanks{Centre for Mathematics and its Applications,
Mathematical Sciences Institute,
Australian National University, Canberra, ACT 0200, Australia}}
\begin{document}
\maketitle

\begin{abstract}
The Hasegawa--Wakatani models are used in the study of confinement of hot plasmas with externally imposed magnetic fields.

The nonlinear terms in the Hasegawa--Wakatani models complicate the analysis of the system as they propagate local changes across the entire system. Centre manifold analysis allows us to project down onto much smaller systems that are more easily analysed. Qualitative information about the behaviour of the reduced system, such as whether it is stable or unstable, can be used to predict the behaviour of the original full system.
We show how the simple structure of the linear part of the Hasegawa--Wakatani equations can be used to define these projection operators. 

The centre manifold analysis will be used on a few examples to highlight certain properties of the Hasegawa--Wakatani models. 
\end{abstract}

\begin{keywords} 
Hasegawa--Wakatani equations, centre manifold theory, nonlinear ordinary differential equations.
\end{keywords}

%\begin{AMS}
%65L06, 65M12, 65D99, 65Y99, 65Z05  - CHECK
%\end{AMS}

\section{Introduction}

A number of papers focus on interpreting the numerical simulations of the Hasegawa--Wakatani equations \cite{Biskmap1994,Camargo1995, Korsholm1999, Pedersen:1996}, but few have tried to analyse the system's behaviour and those that have are applicable under limited conditions \cite{Camargo:1998, NumataBallDewar_07a, Pedersen:1996b, stals2008}. In this paper we develop a centre manifold analysis that predicts and explains many interesting aspects of the system's behaviour, including examples of unstable solutions. The particular structure of the Hasegawa--Wakatani system allows us to explicitly write down an approximation to the centre manifold, without the need of any expensive computation, and thus we can study the behaviour of the system over a wide range of parameters. See Section \ref{sec:analysis}.
 
The author has written a code to solve the Hasegawa--Wakatani equations and the initial motivation behind the work presented in this paper was the validation of that code. Sections \ref{sec:eg} and \ref{sec:eg_zero} relate the numerical simulations of the Hasegawa--Wakatani equations to the centre manifold analysis.

\section{The Hasegawa--Wakatani model}\label{sec:hw}

The Hasegawa--Wakatani model \cite{Hasegawa:1983} was designed to extend
the one field Hasegawa--Mima model \cite{Hasegawa:1978}. The equations
in a two-dimensional domain couple the flow field given by the electrostatic potential $\phi$ with the density $\rho$ by
 \begin{eqnarray}
 \frac{\partial}{\partial t} \nabla^2 \phi + [\phi, \nabla^2 \phi] & = &
  \alpha (\phi-\rho) -(-1)^p \beta_{\phi}  \nabla^{2(p+1)}\phi \label{eqn:hw1}\\
 \frac{\partial}{\partial t} \rho + [\phi, \rho] & = & \alpha (\phi-\rho) -\kappa
  \frac{\partial \phi}{\partial y} -  (-1)^p \beta_{\rho} \nabla^{2p} \rho \label{eqn:hw2}
 \end{eqnarray}
 where  $\phi = \phi(x, y, t)$, $\rho = \rho(x, y, t)$, $\nabla^{2} = \partial^2/\partial x^2+\partial^2/\partial y^2$ is the two-dimensional Laplacian and $p=1,2$ is the order of the dissipation operator. The Poisson bracket $[.,.]$ defined by
 \[
 [f,g] = \frac{\partial f}{\partial x} \frac{\partial g}{\partial y}  -
 \frac{\partial f}{\partial y} \frac{\partial g}{\partial x},
 \]
gives the convective derivative.  The parameters, $\alpha$,
 $\beta_{\phi}$, $\beta_{\rho}$ and $\kappa$ are usually taken to be 
 non-negative, but to simplify the analysis we will assume  $\alpha$,
 $\beta_{\phi}$, $\beta_{\rho}$ are all positive. The coefficients $\beta_\phi$ and $ \beta_{\rho}$  denote 
viscosity and diffusion  coefficients respectively. These terms, often called 
hyper-diffusions, may be small but are necessary to damp fluctuation energy that reaches the smallest scales. 
Physically, the $\kappa$ term feeds energy to the
 system, while the energy is dissipated by the parallel resistivity
 ($\alpha$) and  hyper-diffusions.  We refer the
reader to
\cite{Camargo:1998,Hasegawa:1983,NumataBallDewar_07a,Pedersen:1996} for
more discussions on the physical interpretation of the system.

\subsection{Spectral discretisation of the Hasegawa--Wakatani
 system}\label{sec:fs}

It is appropriate to use a Fourier spectral method to discretise
Equations \eqref{eqn:hw1} and \eqref{eqn:hw2} since much of the
essential physics of interest is contained in the energy distributions
over scales of the motions.

We assume that $\phi$ and $\rho$ are periodic over the domain 
$\Omega = [-\pi, \pi] \times [-\pi, \pi]$. Following the structure given in \cite{Geveci:1989} define
\begin{eqnarray*}
H^{p+1}(\Omega)& =&  \left \{u: u= \sum_{k_x,k_y=-\infty}^{\infty}  U_{\bm{k}}\,  \omega_x^{k_x}\,\omega_y^{k_y},\, U_{(0,0)} = 0, \, U_{(-k_x, -k_y)} = \overline{U}_{(k_x, k_y)}, \right.\\
&& \quad \left . \sum_{k_x,k_y=-\infty}^{\infty}  \left(\bm{k}^{2(p+1)}|U_{\bm{k}}|\right)^2 < \infty\right \},
\end{eqnarray*}
where  $\omega_x = e^{i x}$, $\omega_y = e^{i y}$, $\bm{k} = (k_x, k_y)$, $\bm{k}^{2q} = (k_x^2+k_y^2)^q$ and  $k_x$, $k_y$ are integers.
We assume that $\phi, \rho \in H^{p+1}$. Given $m = 2n$ for some positive integer $n$, let the corresponding discrete space $H_m^{p+1} \subset H^{p+1}$ be defined as
\begin{equation}\label{eqn:space}
H^{p+1}_m(\Omega) = \left \{u: u \in H^{p+1}(\Omega), \,u= \dsumab  U_{\bm{k}}\,  \omega_x^{k_x}\,\omega_y^{k_y} \right \}.
\end{equation}

Consequently $\phi_m \in H_m$ and $\rho_m \in H_m$ may be written as 
\begin{eqnarray}\label{eqn:f1}
\phi_m(x, y) & = & \dsumab \Phi_{\bm{k}}\, \omega_x^{k_x}\,\omega_y^{k_y} ,\\
\label{eqn:f2}
\rho_m(x, y) & = &\dsumab R_{\bm{k}}\, \omega_x^{k_x}\,\omega_y^{k_y}.
\end{eqnarray}

Substituting expansions \eqref{eqn:f1} and \eqref{eqn:f2}
into the Hasegawa--Wakatani equations \eqref{eqn:hw1} and \eqref{eqn:hw2}, making use of the differentiation properties of the Fourier transforms 
%(see for example \cite{STABLE AND CTAC}), 
and collecting like terms gives 

\begin{equation}\label{eqn:ode-1}
 \begin{split}
-\bm{k}^2 \frac{\partial}{\partial t} \Phi_{\bm{k}} + (N(\bm{\Phi},
  D_N\bm{\Phi}))_{\bm{k}} & = \alpha\left(\Phi_{\bm{k}} - R_{\bm{k}} \right)+\bm{k}^{2(p+1)} \beta_{\phi}\Phi_{\bm{k}},\\
\frac{\partial}{\partial t} R_{\bm{k}} + (N(\bm{\Phi}, \bm{R}))_{\bm{k}} & = \alpha
  \left(\Phi_{\bm{k}} - R_{\bm{k}}\right) - ik_y \kappa\, \Phi_{\bm{k}} -
  \bm{k}^{2p} \beta_{\rho} R_{\bm{k}},
 \end{split}
\end{equation}
and $D_N = \diag(\bm{k}^2)$.
The nonlinear terms $N(\bm{\Phi}, D_N\bm{\Phi})$ and $N(\bm{\Phi}, \bm{R})$ are given
by the convolution sums;
\begin{equation} \label{eqn:nonlin_1}
\left(N(\bm{\Phi}, D_N\bm{\Phi})\right)_{\bm{k}}
%&=& \dsumq i p \Phi_{p,q} i (b-q)\left((a-p)^2+(b-q)^2\right) \Phi_{a-p, b-q} \\
%&& \quad - iq \Phi_{p,q} i (a-p)\left((a-p)^2+(b-q)^2\right) \Phi_{a-p, b-q}\\
=  \dsumq (k_x q_y-q_x k_y) \left((k_x-q_x)^2+(k_y-q_y)^2\right)\Phi_{\bm{q}} \Phi_{\bm{k}-\bm{q}},
\end{equation}
and
\begin{equation}\label{eqn:nonlin_2}
\left(N(\bm{\Phi}, \bm{R})\right)_{\bm{k}} 
%&=& \dsumq i p \Phi_{p,q} i (b-q) R_{a-p, b-q} - iq \Phi_{p,q} i (a-p) R_{a-p, b-q}\\
=   \dsumq (k_x q_y-q_x k_y) \Phi_{\bm{q}} R_{\bm{k}-\bm{q}}. 
\end{equation}
It is convenient to write down the nonlinear term as a convolution sum for the theoretical analysis, but in practise these terms are evaluated using fast Fourier transformations.

To fit the standard definition of an ordinary differential equation we would like to divide both sides of  Equation \eqref{eqn:ode-1} by $-\bm{k}^2$, but $\bm{k}^2$ is zero when $\bm{k} = (0,0)$.  In Fourier space the $(0,0)$ mode is equivalent to the $(2n, 2n)$ mode, which suggest the following definition,
\[
\bm{k}^2_{+} = \left \{ \begin{array}{cc}
                           \bm{k}^2 & \mbox{ if } \bm{k}^2 \ne 0\\
                           8n^2 & \mbox{ if } \bm{k}^2 = 0
			\end{array} \right ..
\] 

 The Hasegawa--Wakatani Equations in Fourier space are
\begin{eqnarray}
\frac{\partial}{\partial t} \Phi_{\bm{k}} & = &\frac{(N(\bm{\Phi}, D_N\bm{\Phi}))_{\bm{k}}}{\bm{k}^2_+}  - \frac{\alpha\left(\Phi_{\bm{k}} - R_{\bm{k}} \right)}{\bm{k}^2_+} -  \beta_{\phi} \bm{k}^{2p} \Phi_{\bm{k}} \label{eqn:fhw1},\\
\frac{\partial}{\partial t} R_{\bm{k}} & = & - (N(\bm{\Phi}, \bm{R}))_{\bm{k}} + \alpha \left(\Phi_{\bm{k}} - R_{\bm{k}}\right) - ik_y \kappa \Phi_{\bm{k}} - \beta_{\rho} \bm{k}^{2p}  R_{\bm{k}},\label{eqn:fhw2}
\end{eqnarray}
for $-n \le k_x, k_y < n$.

From the definition of $H_m$, see \eqref{eqn:space}, we require $\Phi_{\bm{0}}=R_{\bm{0}}=0$ for all values of $t$. According to Equations \eqref{eqn:fhw1} and \eqref{eqn:fhw2}, if the initial values of ${\Phi}_{\bm{0}}$ and ${R}_{\bm{0}}$ are 0 they remain 0.

\subsection{The numerical solution of the Hasegawa--Wakatani equations}

 The system of ordinary differential equations defined by \eqref{eqn:fhw1} and \eqref{eqn:fhw2} are stiff and appropriate numerical schemes must be used to extract long term behaviour patterns. The author has studied the applicability of several numerical schemes for the solution of the Hasegawa--Wakatani equations, see \cite{stals:961}, and has written a Fortran 90 code that solves the equations using a fourth order implicit-explicit variable time step BDF scheme \cite{ Soderlind2003,wang:2005, Wang2008}. The results reported in Section \ref{sec:eg} were obtained through the use of this code.  High frequency components are damped by using a 2/3 dealising scheme similar to the one described by Pedersen et. al. \cite{Pedersen:1996}. Pedersen et. al. also use a semi-implicit scheme.

Another code referenced by a number of papers \cite{Biskmap1994, Camargo1995, Camargo:1998} has been developed by Scott \cite{Scott:2006,Scott:2007}. In \cite{Scott:2007} Scott argues that implicit methods are not appropriate for the Hasegawa--Wakatani equations. Our analysis in \cite{stals:961} shows that the Hasegawa--Wakatani equations are stiff and the bound on the time step size required to ensure stability makes the use of explicit methods impractical. We have found the implicit-explicit BDF method to be  reliable and efficient.

\section{Eigendecomposition}\label{sec:linear}

The idea of the centre manifold analysis is to project the large system defined by Equations \eqref{eqn:fhw1} and \eqref{eqn:fhw2} onto a much smaller system that retains much of the qualitative type of behaviour of the original system. In order to find such a projection we firstly need to look at the eigenvalues and eigenvectors of the linear part the Hasegawa--Wakatani equations.

Rewrite Equations \eqref{eqn:fhw1} and \eqref{eqn:fhw2} as
\begin{equation}\label{eqn:lin1}
\frac{\partial }{\partial t} \begin{bmatrix} \bm{\Phi}\\ \bm{R}\end{bmatrix}
 = L \begin{bmatrix} \bm{\Phi}\\ \bm{R}\end{bmatrix} + F\left(\bm{\Phi}, \bm{R}\right),
\end{equation}
where
\[
L = \begin{bmatrix} A & B\\ C & D \end{bmatrix},
\]
\[
A = \diag\left(-\frac{\alpha}{\bm{k}^2_+} - \beta_{\phi}\bm{k}^{2p}\right), \quad B = \diag\left(\frac{\alpha}{\bm{k}^2_+}\right),
\]
\[
 C = \diag\left(\alpha - ik_y\kappa\right), \quad  D =\diag\left(-\alpha- \beta_{\rho}\bm{k}^{2p}\right),
\]
and
\[
 F\left(\bm{\Phi}, \bm{R}\right) 
= \begin{bmatrix} (\diag(\bm{k}^2_+))^{-1} & 0\\ 0 & I \end{bmatrix}\begin{bmatrix} N(\bm{\Phi}, D_N\bm{\Phi}) \\  - N(\bm{\Phi}, \bm{R})\end{bmatrix}.
\]
Recall $-n \le k_x, k_y < n$. So $\bm{\Phi} \in \C^{m^2}$, $\bm{R} \in \C^{m^2}$ and $L \in \C^{2m^2,2m^2}$.
\subsection{Eigenvalues of linear system} \label{sec:eigenvalue}
A linear approximation to \eqref{eqn:fhw1} and \eqref{eqn:fhw2} about the point $(0,0)$ is given by
\[
\frac{\partial }{\partial t} \begin{bmatrix} \bm{\Phi}\\ \bm{R}\end{bmatrix}
 = L \begin{bmatrix} \bm{\Phi}\\ \bm{R}\end{bmatrix}.
\]
Given the block structure of the $L$ matrix it is possible to explicitly write down its eigenvalues and eigenvectors. Such information is crucial to the centre manifold analysis as it shows which modes influence the long term behaviour of the system. 

%If $\alpha = 0$ eigenvalues of $L$ are of the form $-\beta_{\phi}\bm{k}^{2p}$ and $-\beta_{\rho}\bm{k}^{2p}$ and thus the linear system is always stable. In the following analysis we will assume $\alpha \ne 0$.

Suppose that $\lambda$ is not an eigenvalue of $A$ and consider
\begin{eqnarray*}
|L - \lambda I| &=& \left|\begin{array}{cc}
                          A - \lambda I & B\\
                          C & D-\lambda I
                          \end{array} \right|\\
&=& \left|\begin{array}{cc}
                          A - \lambda I & B\\
                          0 & (D-\lambda I)-C(A-\lambda I)^{-1}B
                          \end{array} \right|.
\end{eqnarray*}
$I \in \R^{m^2, m^2}$ is an identity matrix. The above determinant will be zero when
\begin{equation}\label{eqn:eig}
(A-\lambda I)(D-\lambda I)-CB = \lambda^2I - \lambda(A+D)+AD-CB = 0.
\end{equation}
Hence the $2m^2$, not necessarily distinct, eigenvalues are given by
\begin{equation}\label{eqn:lambda}
\lambda^{\pm}_{\bm{k}} = \frac{(A+D)_{\bm{k}}\pm \sqrt{\left((A+D)_{\bm{k}}\right)^2-4\,(AD-CB)_{\bm{k}}}}{2}.
\end{equation}

The following lemma confirms that the matrix $A-\lambda I$ is non-singular.

\begin{lem}\label{lem:eigA}
Eigenvalues of $L$ are not eigenvalues of $A$. 
\end{lem}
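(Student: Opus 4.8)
The plan is to argue by contradiction, exploiting the fact that each of the blocks $A$, $B$, $C$, $D$ is diagonal. First I would record the consequence of that structure: after permuting the unknowns into the pairs $(\Phi_{\bm{k}}, R_{\bm{k}})$, the matrix $L$ is similar to a block-diagonal matrix whose $\bm{k}$-th block is the $2\times 2$ matrix with rows $(A_{\bm{k}}, B_{\bm{k}})$ and $(C_{\bm{k}}, D_{\bm{k}})$. Hence $\lambda$ is an eigenvalue of $L$ exactly when the scalar form of \eqref{eqn:eig},
\[
p_{\bm{k}}(\lambda) := (A_{\bm{k}} - \lambda)(D_{\bm{k}} - \lambda) - B_{\bm{k}}C_{\bm{k}} = 0,
\]
holds for at least one index $\bm{k}$, while $\lambda$ is an eigenvalue of the diagonal matrix $A$ exactly when $\lambda = A_{\bm{j}}$ for some index $\bm{j}$. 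So it suffices to show that $p_{\bm{k}}(A_{\bm{j}}) \neq 0$ for every pair of indices $\bm{j}$, $\bm{k}$.

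The core of the argument is the diagonal case $\bm{j} = \bm{k}$, which is the situation actually invoked when the Schur complement in \eqref{eqn:eig} is formed. Substituting $\lambda = A_{\bm{k}}$ collapses the first factor and leaves $p_{\bm{k}}(A_{\bm{k}}) = -B_{\bm{k}}C_{\bm{k}}$. I would then read off from the definitions that $B_{\bm{k}} = \alpha/\bm{k}^2_+$ is a strictly positive real number, since $\alpha > 0$ and $\bm{k}^2_+ > 0$ by construction, and that $C_{\bm{k}} = \alpha - ik_y\kappa$ has real part $\alpha > 0$ and is therefore nonzero. Consequently $B_{\bm{k}}C_{\bm{k}} \neq 0$, so $A_{\bm{k}}$ cannot be a root of $p_{\bm{k}}$, which is the assertion.

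For the off-diagonal indices $\bm{j} \neq \bm{k}$ I would isolate the imaginary part of $p_{\bm{k}}(A_{\bm{j}})$. Because $A_{\bm{k}}$, $A_{\bm{j}}$ and $D_{\bm{k}}$ are all real, the product $(A_{\bm{k}} - A_{\bm{j}})(D_{\bm{k}} - A_{\bm{j}})$ is real, so the entire imaginary part comes from $-B_{\bm{k}}C_{\bm{k}}$ and equals $\alpha k_y\kappa/\bm{k}^2_+$. This is nonzero whenever $k_y\kappa \neq 0$, which settles every mode with $k_y \neq 0$ (when $\kappa > 0$) immediately.

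The main obstacle is precisely the remaining case: the cross-block comparison $p_{\bm{k}}(A_{\bm{j}}) = 0$ with $\bm{j} \neq \bm{k}$ when $k_y\kappa = 0$. Here $C_{\bm{k}} = \alpha$ is real, every quantity in $p_{\bm{k}}(A_{\bm{j}})$ is real, and the imaginary-part trick gives nothing; one is left with a single real equation $(A_{\bm{k}} - A_{\bm{j}})(D_{\bm{k}} - A_{\bm{j}}) = \alpha^2/\bm{k}^2_+$ among negative reals whose two factors carry no fixed sign. I expect this to require either a quantitative estimate separating the distinct diagonal entries $A_{\bm{j}}$ from the roots of $p_{\bm{k}}$, or an appeal to genericity of the parameters $\alpha$, $\beta_\phi$, $\beta_\rho$, $\kappa$. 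If, however, the intended reading of the lemma is the block-local one needed to validate the Schur complement, then this case never arises and the first two steps already constitute a complete proof.
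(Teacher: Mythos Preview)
Your block-local argument (the case $\bm{j}=\bm{k}$) is exactly the paper's proof: assume $\lambda$ is simultaneously an eigenvalue of $L$ and of $A$, so $A_{\bm{k}}-\lambda=0$ for some $\bm{k}$; substitute into the scalar characteristic relation and conclude $C_{\bm{k}}B_{\bm{k}}=0$, hence $\alpha=0$, contradicting $\alpha>0$. The paper does nothing beyond this.

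Your proposal is in fact more careful than the paper's own argument. The paper tacitly identifies the index at which $\lambda=A_{\bm{k}}$ with the index at which the quadratic \eqref{eqn:eig} vanishes, and never discusses the cross-block possibility $\bm{j}\neq\bm{k}$ that you isolate. Your imaginary-part observation for $k_y\kappa\neq 0$ is a genuine addition, and the residual obstacle you flag (modes with $k_y=0$, or $\kappa=0$) is real: the paper's proof, read literally, does not cover it either.

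Your closing suspicion is the correct resolution in context. Both places where the lemma is invoked---the Schur-complement derivation of \eqref{eqn:eig}, and the eigenvector formula $\bm{u}^{\pm}=-(A-\lambda_{\bm{k}}^{\pm}I)^{-1}B\bm{w}$ in Section~\ref{sec:eigenvector}---only require $A_{\bm{k}}\neq\lambda_{\bm{k}}^{\pm}$ for the \emph{same} index $\bm{k}$ (note that $\bm{w}$ there is supported on the single mode $\bm{k}$, so only the $\bm{k}$-th diagonal entry of $(A-\lambda_{\bm{k}}^{\pm}I)^{-1}$ is used). Hence the block-local statement suffices, and your first two steps already match and slightly sharpen what the paper actually proves.
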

\begin{proof}
Suppose one of the eigenvalues of $L$, say $\lambda$, was an eigenvalue of $A$. Then, as $A$ is a diagonal matrix, $A_{\bm{k}} - \lambda = 0$ for some $\bm{k}$. Substituting this into \eqref{eqn:eig} gives $CB_{\bm{k}} = 0$, which intern implies $\alpha = 0$. A contradiction on the assumption that $\alpha > 0$.
\end{proof}

For the stability analysis we are interested in the sign of the real part of the eigenvalues. Firstly note that
\[
(A+D)_{\bm{k}} = -\left(\frac{\alpha}{\bm{k}^2_+}+\alpha+(\beta_{\phi}+\beta_{\rho})\bm{k}^{2p}\right) \in \R < 0,
\]
so $\Re(\lambda^-_{\bm{k}}) < 0$ for all $\bm{k}$.
For terms under the square root we see that
\begin{eqnarray*}
(AD-CB)_{\bm{k}} &=& \frac{\alpha+\beta_{\phi}\bm{k}^{2p}\bm{k}^2_+}{\bm{k}^{2}_+} \left(\alpha+\beta_{\rho}\bm{k}^{2p}\right)-\frac{\alpha}{\bm{k}^2_+}\left(\alpha-ik_y\kappa\right)\\
&=& \frac{1}{\bm{k}^2_+}\left[\alpha\beta_{\rho}\bm{k}^{2p}+\alpha\beta_{\phi}\bm{k}^{2(p+1)}+\beta_{\phi}\beta_{\rho}\bm{k}^{2(2p+1)}+i\alpha k_y\kappa\right] %\in \C.
\end{eqnarray*}
%Recall
%\[
%\sqrt{x+iy} = \chi + i \eta,
%\]
%where
%\[
%\chi = \frac{1}{\sqrt 2}\sqrt{\sqrt{x^2+y^2}+x}, \quad \eta = \frac{\mbox{sign } y}{\sqrt 2}\sqrt{\sqrt{x^2+y^2}-x}. 
%\]
Consequently, if $\kappa$ is large enough, $\Re(\lambda^+_{\bm{k}}) > 0$ for small values of $\bm{k}^2$ and $\Re(\lambda^+_{\bm{k}}) < 0$ for large values of $\bm{k}^2$. 

When $\bm{k} = (0,0)$, $\lambda_{\bm{k}}^{+} = 0$. We always assume that the initial values of $\bm{\Phi}_{(0,0)}$ and $\bm{R}_{(0,0)}$ are 0 and remain 0 over time, avoiding the issue of non-uniqueness arising from the 0 eigenvalue.

For large values of $\bm{k}^2$,
\[
\lambda^{\pm}_{\bm{k}} \approx \frac{-(\beta_{\phi}+\beta_{\rho})\bm{k}^{2p}\pm |\beta_{\phi}-\beta_{\rho}|\bm{k}^{2p}}{2}.
\]
In which case the eigenvalues are approximately $-\beta_{\rho}\bm{k}^{2p}$ or $-\beta_{\phi}\bm{k}^{2p}$, suggesting that the high frequency components of the solution  quickly approach zero.

Studies of the linear part of the Hasegawa-Wakatani equation have already been carried out by several authors, see \cite{Camargo1995, Camargo:1998, NumataBallDewar_07a, stals2008}, although in these references the analysis was not expressed explicitly in terms of eigenvalues. 

\subsection{Eigenvectors}\label{sec:eigenvector}
 Once again, due to the block diagonal structure of the $L$ matrix we can explicitly write down the eigenvectors. 

Consider one of the eigenvalues $\lambda_{\bm{k}}^{+}$ (or $\lambda_{\bm{k}}^-$) and let $\bm{w} \in \R^{m^2}$ be defined by
\[
w_{\bf{j}} = \left\{\begin{array}{cl}
                    1, & \mbox{ if } \bm{j} = \bm{k} \\
                    0, & \mbox{otherwise}
                    \end{array} \right..
\]
Now let $\bm{u} \in \C^{m^2}$ be given by
\[
\bm{u}^{+} = -(A-\lambda_{\bm{k}}^{+})^{-1}B \bm{w}.
\]
(or
\[
\bm{u}^{-} = -(A-\lambda_{\bm{k}}^{-})^{-1}B \bm{w}.)
\]

By Lemma \ref{lem:eigA} $(A-\lambda_{\bm{k}}^{\pm}I)$ is nonsingular and as it is a diagonal matrix the inverse can be easily found. The eigenvector corresponding to the eigenvalue $\lambda^{\pm}_{\bm{k}}$ is $\begin{bmatrix} \bm{u}^{\pm} & \bm{w}\end{bmatrix}^{T}$.

\section{Analysis of nonlinear behaviour}\label{sec:analysis}

Section \ref{sec:linear} focussed on the behaviour of the linear part of the Hasegawa--Wakatani equations, which is a necessary first step to understanding the nonlinear behaviour. The highly structured nature of the Hasegawa--Wakatani equations means we are able to systematically use centre manifold theory to predict the behaviour of the nonlinear system.

To apply the ideas behind centre manifold theory we firstly need to build an appropriate {\it suspended system}. 
 
\subsection{Suspended system}\label{sec:suspend}

The matrix in \eqref{eqn:lin1} depends on the parameters $\alpha$, $\kappa$ and $\beta_{\phi}$ and $\beta_{\rho}$. Let $\hat \epsilon$ represent one of the previously mentioned parameters and set $\epsilon = \hat \epsilon - \epsilon^\star$. We will use $\epsilon^\star$ in Section \ref{sec:eg} to reference a particular choice of parameters that gives eigenvalues with zero real parts. 
A suspended system corresponding to the system given in \eqref{eqn:lin1} is
\begin{eqnarray}\label{eqn:suspend}
\frac{\partial }{\partial t} \begin{bmatrix}\bm{\Phi} \\ \bm{R} \end{bmatrix}
&=&  L(\epsilon^\star) \begin{bmatrix}\bm{\Phi} \\ \bm{R}\end{bmatrix} + \epsilon G \begin{bmatrix}\bm{\Phi} \\ \bm{R}\end{bmatrix} + F(\bm{\Phi}, \bm{R}),\\
\frac{\partial{\epsilon}}{\partial t} & = & 0.\nonumber
\end{eqnarray}

The $G$ matrix represents the interaction of the parameter $\hat \epsilon$ with $\bm{R}$ and $\bm{\Phi}$.  For example, if $\hat \epsilon = \alpha$, we can rewrite \eqref{eqn:fhw1} and \eqref{eqn:fhw2} as
\begin{eqnarray*}
\frac{\partial}{\partial t} \Phi_{\bm{k}} & = &\frac{(N(\bm{\Phi}, D_N\bm{\Phi}))_{\bm{k}}}{\bm{k}^2_+}  - \frac{\alpha^\star\left(\Phi_{\bm{k}} - R_{\bm{k}} \right)}{\bm{k}^2_+} -  \beta_{\phi} \bm{k}^{2p} \Phi_{\bm{k}} \\
&& - \frac{(\alpha-\alpha^\star)\left(\Phi_{\bm{k}} - R_{\bm{k}} \right)}{\bm{k}^2_+}, \\
\frac{\partial}{\partial t} R_{\bm{k}} & = & - (N(\bm{\Phi}, \bm{R}))_{\bm{k}} + \alpha^{\star} \left(\Phi_{\bm{k}} - R_{\bm{k}}\right) - ik_y \kappa \Phi_{\bm{k}} - \beta_{\rho} \bm{k}^{2p}  R_{\bm{k}} \\
&& +(\alpha -\alpha^{\star})\left(\Phi_{\bm{k}} - R_{\bm{k}}\right), 
\end{eqnarray*}
and
\[
G  \begin{bmatrix}\bm{\Phi} \\ \bm{R} \end{bmatrix} =  \begin{bmatrix} -(\diag(\bm{k}^2_+))^{-1} & (\diag(\bm{k}^2_+))^{-1} \\ I & -I \end{bmatrix}\begin{bmatrix} \bm{\Phi} \\  \bm{R}\end{bmatrix}.
\]

 The eigenvalues  and eigenvectors of $L(\epsilon^\star)$ are discussed in Sections \ref{sec:eigenvalue} and \ref{sec:eigenvector}.
To help with the notation we will place the eigenvectors into the columns of the matrix $P$. In particular,
\[
P = \begin{bmatrix}-(A-\diag(\lambda^+_{\bm{k}}))^{-1}B &-(A-\diag(\lambda^-_{\bm{k}}))^{-1}B \\
I &I  \end{bmatrix}.
\]

Due to the simple structure of $P$ we can find
\[
P^{-1} = \begin{bmatrix}-B^{-1}(A-\diag(\lambda^-_{\bm{k}}))E(A-\diag(\lambda^+_{\bm{k}})) & -B^{-1}(A-\diag(\lambda^+_{\bm{k}}))EB \\
B^{-1}(A-\diag(\lambda^-_{\bm{k}}))E(A-\diag(\lambda^+_{\bm{k}})) & B^{-1}(A-\diag(\lambda^-_{\bm{k}}))EB \end{bmatrix},
\]
where
\[
E = \left(\diag(\lambda^{+}_{\bm{k}})-\diag(\lambda^{-}_{\bm{k}}) \right)^{-1},
\]
which is non-singular. The entries of $P^{-1}$ may look complicated, but they are all diagonal matrices.

Let \[\Lambda = \begin{bmatrix}\diag(\lambda_{\bm{k}}^{+}) & 0 \\ 0 & \diag(\lambda_{\bm{k}}^{-})\end{bmatrix}.\]
We then have the following eigendecomposition
\[
L(\epsilon^\star) = P \Lambda P^{-1}.
\]

Now let $\Pi$ be a permutation matrix such that the first $a$ entries along the diagonal of $\bar \Lambda = \Pi \Lambda \Pi^{-1}$ have zero real parts and the remaining $b = 2m^2-a$ entries have non-zero real components. Multiplying both sides of \eqref{eqn:suspend} by $Q = \Pi P^{-1}$ gives
\begin{eqnarray}
\frac{\partial }{\partial t} \bm{X}
&=& \Lambda_X \bm{X} + F_X(\bm{X}, \bm{Y})\label{eqn:centre1}\\
\frac{\partial }{\partial t} \bm{Y} &=& \Lambda_Y \bm{Y} + F_Y(\bm{X}, \bm{Y})\label{eqn:centre2},
\end{eqnarray}
where
\[
Q \begin{bmatrix}\bm{\Phi} \\ \bm{R} \end{bmatrix} = \begin{bmatrix}\bm{X} \\ \bm{Y}\end{bmatrix}, \quad \bm{X} \in \C^a, \bm{Y}\in \C^b,
\]
\[
\bar \Lambda = \begin{bmatrix}\Lambda_X & 0 \\ 0 & \Lambda_Y\end{bmatrix}, \quad \Lambda_X \in \C^{a,a}, \Lambda_Y \in \C^{b,b},
\]
\[
F_X(\bm{X}, \bm{Y}) = \begin{bmatrix}I_{a,a} & 0 \end{bmatrix}\left(\epsilon M \begin{bmatrix}\bm{X} \\ \bm{Y}\end{bmatrix}+\bar F(\bm{X}, \bm{Y})\right),\]
\[F_Y(\bm{X}, \bm{Y}) = \begin{bmatrix}0 & I_{b,b} \end{bmatrix}\left(\epsilon M \begin{bmatrix}\bm{X} \\ \bm{Y}\end{bmatrix}+\bar F(\bm{X}, \bm{Y})\right),
\]
\[
\bar F(\bm{X}, \bm{Y}) = QF(\bm{\Phi}, \bm{R}),
\]
and $M = Q G Q^{-1}$.
$I_{a,a}$ and $I_{b,b}$ are  $a \times a$ and $b \times b$ identity matrices respectively.

There is some freedom in the choice of $\Pi$, as long as the first $a$ entries of the diagonal of $\bar \Lambda$ have zero real components. We define $\Pi$ as
\[
\Pi = \begin{bmatrix}\Pi_+ & 0 \\
			0 & I_{m^2, m^2}\end{bmatrix}
\]
where $\Pi_+ \in \R^{m^2, m^2}$ is defined such that 
\[-\left|\Re(\Pi_+\diag(\lambda_{\bm{k}}^+)\Pi_+^{-1})_{j_1}\right| > -\left|\Re(\Pi_+\diag(\lambda_{\bm{k}}^+)\Pi_+^{-1})_{j_2}\right|\] if $j_1 < j_2$.

Equations \eqref{eqn:nonlin_1} and \eqref{eqn:nonlin_2} show that $F(\bm{0},\bm{0}) = \bm{0}$, which gives $\bar F\left(\bm{0}\right) = \bm{0}$. Furthermore $D F(\bm{0}, \bm{0}) = 0$ (where $DF$ is the Jacobian of $F$), so $D\bar F\left(\bm{0}\right) = \bm{0}$.
Therefore we can apply centre manifold theory as described in \cite{Carr:1981} to predict the behaviour of \eqref{eqn:lin1} by projecting a system defined on $\C^{2m^2, 2m^2}$ down onto a much lower dimensional space of size $\C^{a, a}$.

\subsection{Centre manifold theory}\label{sec:reduce}

Consider the system
\begin{equation}\label{eqn:reduce}
\frac{\partial }{\partial t} \bm{X}
= \Lambda_X \bm{X} + F_X(\bm{X}, H(\bm{X}, \epsilon))
\end{equation}
where $H : \C^{a+1} \rightarrow \C^{b}$ satisfies
\begin{eqnarray*}
DH(\bm{X}, \epsilon)\left[\Lambda_X \bm{X} + F_X(\bm{X}, H(\bm{X}))\right] &= & \Lambda_Y H(\bm{X}, \epsilon) + F_Y(\bm{X}, H(\bm{X}, \epsilon)),\\
H(\bm{0}, 0) & = & 0, \\
DH(\bm{0}, 0) & = & 0.
\end{eqnarray*}

$\Lambda_X$ is a diagonal matrix whose entries have zero real parts and $\Lambda_Y$ is a diagonal matrix whose entries have non-zero real components. We expect the value of $a$ to be relatively small. 

In the analysis of the suspended system shown below we project down onto the subspace spanned by the eigenvectors corresponding to the eigenvalues stored in $\Lambda_X$. This projection is carried out by the $Q = \Pi P^{-1}$ matrix. By considering the block structure of the $P^{-1}$ matrix we see that $X_{\bm{k}}$ is some linear combination of
$\Phi_{\bm{k}}$ and $R_{\bm{k}}$. In other words, analysis carried out on mode $\bm{k}$ in the reduced system will, directly, tell us about the behaviour of mode $\bm{k}$ in the full system (By the full system we mean the system defined in \eqref{eqn:fhw1} and \eqref{eqn:fhw2}). Or put more crudely, there is no mixing of the modes as we move between the two different spaces.

By Theorem 2, Section 2.4 of \cite{Carr:1981}, the solutions of \eqref{eqn:centre1} and \eqref{eqn:centre2} are stable (asymptotically stable) (unstable) if the solutions of system \eqref{eqn:reduce} are stable (asymptotically stable) (unstable).

Using the above conditions to find $H$ is, in general, very difficult, instead we find an approximation $\Psi : \C^{a+1} \rightarrow \C^{b}$. Define
\begin{eqnarray}
(N\Psi)(\bm{X}, \epsilon) &=& D\Psi(\bm{X}, \epsilon)\left[\Lambda_X \bm{X} + F_X(\bm{X}, \Psi(\bm{X}, \epsilon))\right]\nonumber\\
&& - \left(\Lambda_Y \Psi(\bm{X}, \epsilon) + F_Y(\bm{X}, \Psi(\bm{X}, \epsilon))\right).\label{eqn:N_psi}
\end{eqnarray}
Suppose $\Psi(\bm{0}, 0) = 0$, $D\Psi(\bm{0}, 0) = 0$ and that 
\[\|(N\Psi)(\bm{X}, \epsilon)\|_{b} = \mathcal{O}\left(\left\|\begin{bmatrix}\bm{X}  \\ \epsilon \end{bmatrix}\right\|_{a+1}^q\right)\] 
as $\|\begin{bmatrix}\bm{X}^T & \epsilon\end{bmatrix}^T \|_{a+1} \rightarrow 0$ where $q > 1$. From Theorem 3, Section 2.5 of \cite{Carr:1981}, 
\[
\|H(\bm{X}, \epsilon)-\Psi(\bm{X}, \epsilon)\|_b =  \mathcal{O}\left(\left\|\begin{bmatrix}\bm{X} \\ \epsilon \end{bmatrix}\right\|_{a+1}^q\right).
\]
$\|.\|_{a+1}$ and $\|.\|_b$ are some norms defined on $\C^{a+1}$ and $\C^b$ respectively.

For now, lets assume $\Psi(\bm{X}, \epsilon) = \epsilon \Psi_L \bm{X} + \Psi_N(\bm{X})$ where
$\Psi_L \in \C^{b,a}$ is a matrix that will be determined below, and $\Psi_N:\C^b \rightarrow \C^a$ is a quadratic of the form $\Psi_N(\bm{X}) = \begin{bmatrix} 0 & I_{bb}\end{bmatrix} \Upsilon_N(\bm{X})$ where $(\Upsilon_N(\bm{X}))_{\bm{k}} = \sum_{\bm{j}_1}\sum_{\bm{j}_2}\xi_{(\bm{j}_1, \bm{j}_2)}^{\bm{k}} X_{\bm{j}_1}X_{\bm{j}_2}$. Rewrite $M = QGQ^{-1}$ as
\[
M = \begin{bmatrix} M_{11} & M_{12}\\
M_{21} & M_{22}
\end{bmatrix}
\]
where $M_{11} \in C^{a, a}$.

To ensure a good approximation to the centre manifold $H$ we want to make $(N\Psi)(\bm{X}, \epsilon)$ in Equation \eqref{eqn:N_psi} \lq small\rq . After substituting  $\Psi(\bm{X}, \epsilon)$ into $(N\Psi)(\bm{X}, \epsilon)$, we firstly collect all of the terms that are linear in $\bm{X}$. These terms are
\[
\epsilon \Psi_L \Lambda_X \bm{X}+ \epsilon^2\Psi_L M_{11}\bm{X} + \epsilon^3\Psi_L M_{12}\Psi_L\bm{X} - \left(\epsilon \Lambda_Y \Psi_L \bm{X} + \epsilon M_{21} \bm{X} + \epsilon^2 M_{22} \Psi_L \bm{X}\right). 
\]
As $\Lambda_X$, $\Lambda_Y$, $M_{11}$ and $M_{22}$ are all diagonal matrices it is straight forward to find $\Psi_L$ so that it solves the equation
\[\epsilon \Psi_L \Lambda_X \bm{X}+ \epsilon^2\Psi_L M_{11}\bm{X}  - \left(\epsilon \Lambda_Y \Psi_L \bm{X} + \epsilon M_{21} \bm{X} + \epsilon^2 M_{22} \Psi_L \bm{X}\right) = 0.\] This would leave us with terms that are $\mathcal{O}(\epsilon^3\bm{X})$. However, to help simplify the analysis of the reduced system we instead choose $\Psi_L$ so that it solves $\epsilon \Psi_L \Lambda_X \bm{X}  - \left(\epsilon \Lambda_Y \Psi_L \bm{X} + \epsilon M_{21} \bm{X} \right) = 0.$ In particular, we set
\[
(\Psi_L)_{ij} = \frac{(M_{21})_{ij}}{(\Lambda_X)_{jj}-(\Lambda_Y)_{ii}}.
\]
The remaining terms are $\mathcal{O}(\epsilon^2\bm{X})$.

Referring back to $(N\Psi)(\bm{X}, \epsilon)$, the terms that are second order in $\bm{X}$ are
\begin{eqnarray*}
\epsilon^2\Psi_LM_{12}\Psi_N(\bm{X})&+&\epsilon\Psi_L\begin{bmatrix}I_{a,a} &0 \end{bmatrix}\bar F(\bm{X}, \epsilon \Psi_L\bm{X})\\
&+& D\Psi_N(\bm{X})\Lambda_{X}\bm{X} + \epsilon D\Psi_N(\bm{X})M_{11}\bm{X} \\
&+& \epsilon^2 D\Psi_N(\bm{X})M_{12}\Psi_L\bm{X}\\
&-&\left(\Lambda_Y \Psi_N(\bm{X})+\epsilon M_{22}\Psi_N(\bm{X})+\begin{bmatrix}0 & I_{b,b} \end{bmatrix}\bar F(\bm{X}, \epsilon \Psi_L\bm{X})\right).
\end{eqnarray*}

Setting $\Upsilon_N(\bm{X})$ so that
\begin{equation}\label{eqn:secondorder}
 D\Upsilon_N(\bm{X})\left(\Lambda_{X} + \epsilon M_{11}\right)\bm{X}
-\begin{bmatrix} 0 & 0 \\ 0 & \left(\Lambda_Y + \epsilon M_{22}\right)\end{bmatrix} \Upsilon_N(\bm{X}) = \bar F(\bm{X}, \epsilon \Psi_L\bm{X}),
\end{equation}
leaves remainder terms that are $\mathcal{O}(\epsilon^2\bm{X}\bm{X}^T)$. Now
\[
\left(D\Upsilon_N(\bm{X})\right)_{\bm{k}, \bm{j}} = \sum_{\bm{j}_2}\left[\xi_{(\bm{j},\bm{j}_2)}^{\bm{k}}+\xi_{(\bm{j}_2,\bm{j})}^{\bm{k}}\right]X_{\bm{j}_2}.
\]
Consider a mode $\bm{k}$ such that $(\Lambda_Y)_{\bm{k}}$ is defined.
The entry of the left hand side of Equation \eqref{eqn:secondorder} corresponding to mode $\bm{k}$ can be written as
\begin{equation}\label{eqn:rhs_quad} \sum_{\bm{j}_1}\sum_{\bm{j}_2}\xi_{(\bm{j}_1,\bm{j}_2)}^{\bm{k}} c_{(\bm{j}_1, \bm{j}_2)}^{\bm{k}} X_{\bm{j}_1}X_{\bm{j}_2},
\end{equation}
where $c_{(\bm{j}_1, \bm{j}_2)}^{\bm{k}} = \left(\Lambda_{X} + \epsilon M_{11}\right)_{(\bm{j}_1, \bm{j}_1)}+ \left(\Lambda_{X} + \epsilon M_{11}\right)_{(\bm{j}_2, \bm{j}_2)}- \left(\Lambda_Y + \epsilon M_{22}\right)_{(\bm{k},\bm{k})}$. Using 
\[
\begin{bmatrix}\bm{\Phi} \\ \bm{R}\end{bmatrix} = Q^{-1} \begin{bmatrix} I_{a, a}\\ \epsilon \Psi_L \end{bmatrix} \bm{X}
\]
with Equations \eqref{eqn:nonlin_1} and \eqref{eqn:nonlin_2} we see that
\begin{equation}\label{eqn:lhs_quad}
\left(\bar F(\bm{X}, \epsilon \Psi_L\bm{X})\right)_{\bm{k}} = \sum_{\bm{j}_1}\sum_{\bm{j}_2} f_{(\bm{j}_1, \bm{j}_2)}^{\bm{k}} X_{\bm{j}_1}X_{\bm{j}_2},
\end{equation}
where the constants $f_{(\bm{j}_1, \bm{j}_2)}^{\bm{k}}$ are known (can be calculated). Comparing Equations \eqref{eqn:rhs_quad} and \eqref{eqn:lhs_quad} suggests
\[
\xi_{(\bm{j}_1,\bm{j}_2)}^{\bm{k}} =  f_{(\bm{j}_1, \bm{j}_2)}^{\bm{k}}/c_{(\bm{j}_1, \bm{j}_2)}^{\bm{k}}.
\]

So $\Psi(\bm{X}, \epsilon) = \epsilon \Psi_L\bm{X} + \Psi_N(\bm{X})$ gives an third order approximation to $H$.

Substituting $\Psi$ back into \eqref{eqn:reduce}, the reduced system becomes
\begin{eqnarray}
\frac{d}{dt}\bm{X} &=& \Lambda_X \bm{X} + \epsilon\left(M_{11}\bm{X}+M_{12}\Psi(\bm{X}, \epsilon)\right)+\begin{bmatrix}I_{a,a} & 0 \end{bmatrix}  \bar F\left(\bm{X},\Psi(\bm{X}, \epsilon)\right)\nonumber\\
&=&  \left[\Lambda_X \bm{X} + \epsilon M_{11}\bm{X}+\epsilon^2 M_{12}\Psi_L\bm{X}\right]\nonumber\\&&\quad
+ \epsilon M_{12}\Psi_N(\bm{X}, \epsilon)+\begin{bmatrix}I_{a,a} & 0 \end{bmatrix}  \bar F\left(\bm{X},\Psi(\bm{X}, \epsilon)\right)\label{eqn:low_order}.
\end{eqnarray}
We can study the behaviour of the above $a \times a$ system to infer the behaviour of the original system in \eqref{eqn:centre1} and \eqref{eqn:centre2}, and consequently \eqref{eqn:fhw1} and \eqref{eqn:fhw2}.

\section{Application of centre manifold analysis - periodic boundaries}\label{sec:eg}

To help better understand the behaviour of the Hasegawa--Wakatani equations, the nonlinear analysis developed in Section \ref{sec:reduce} is firstly applied to example problems with periodic boundary conditions. The zero boundary case is studied in Section \ref{sec:eg_zero}; the results differ greatly from those presented in this section.

In Section \ref{sec:eg_beta} the modes are analysed individually. That is, in the centre manifold analysis we only concentrate on those modes whose eigenvalues have zero real components for a given values of $\alpha$. In Section \ref{sec:period_model} we present some experiments that are designed to take the interaction between the modes into account.

 We will ignore the $\bm{k} = (0,0)$ mode in the following discussion as it does not influence the behaviour of either the full or reduced systems. To simplify the discussion we set $\beta_{\rho}$ = $\beta_{\phi}$ = $\beta$.

\subsection{Individual mode study}\label{sec:eg_beta}

In the first example we set $n$ = 16, $\kappa$ = 1.5, $p$ = 1 and try varying $\alpha$. The $G$ matrix is as defined previously in Section \ref{sec:suspend}.

A small script was written in Scilab\footnote{\url{http://www.scilab.org/}} to evaluate the eigenvalues. The plots in Figure \ref{fig:eig_beta1} show the maximum value of the real part of the non-zero eigenvalues of $L$ for $\beta$ = 0.1, 0.05, 0.01, 0.005 and 0.001, and varying $\alpha$.  In the centre manifold theory we are interested in the choices of $\alpha$ that give eigenvalues with zero real components. It may not be so obvious in the plots, but the graphs cross the $x$-axis twice, once very close to the $y$-axis. That is, for each $\beta$ there is an $\alpha_u$ where the $\Re\left(\lambda^{\pm}_{\bm{k}}\right)< 0$ for all $\bm{k}$ if $\alpha > \alpha_u$ and there is an  $\alpha_l$ where the $\Re\left(\lambda^{\pm}_{\bm{k}}\right)< 0$ for all $\bm{k}$ if $\alpha < \alpha_l$. In other words, the solutions converges to zero  if $\alpha > \alpha_u$ and $\alpha > \alpha_l$. Consequently, we are interested in the region $\alpha_l < \alpha < \alpha_u$. We concentrate on the results for larger values of $\alpha$ and now set $\beta = 0.001$.

In this case $L$ is a matrix of size $2046 \times 2046$. If we were unable to exploit the sparse structure of the matrix and had to calculate the eigenvalues and eigenvectors numerically the computational cost would be exorbitant.
 
\begin{figure}
\begin{center}
\includegraphics[scale = 0.35, angle = 270]{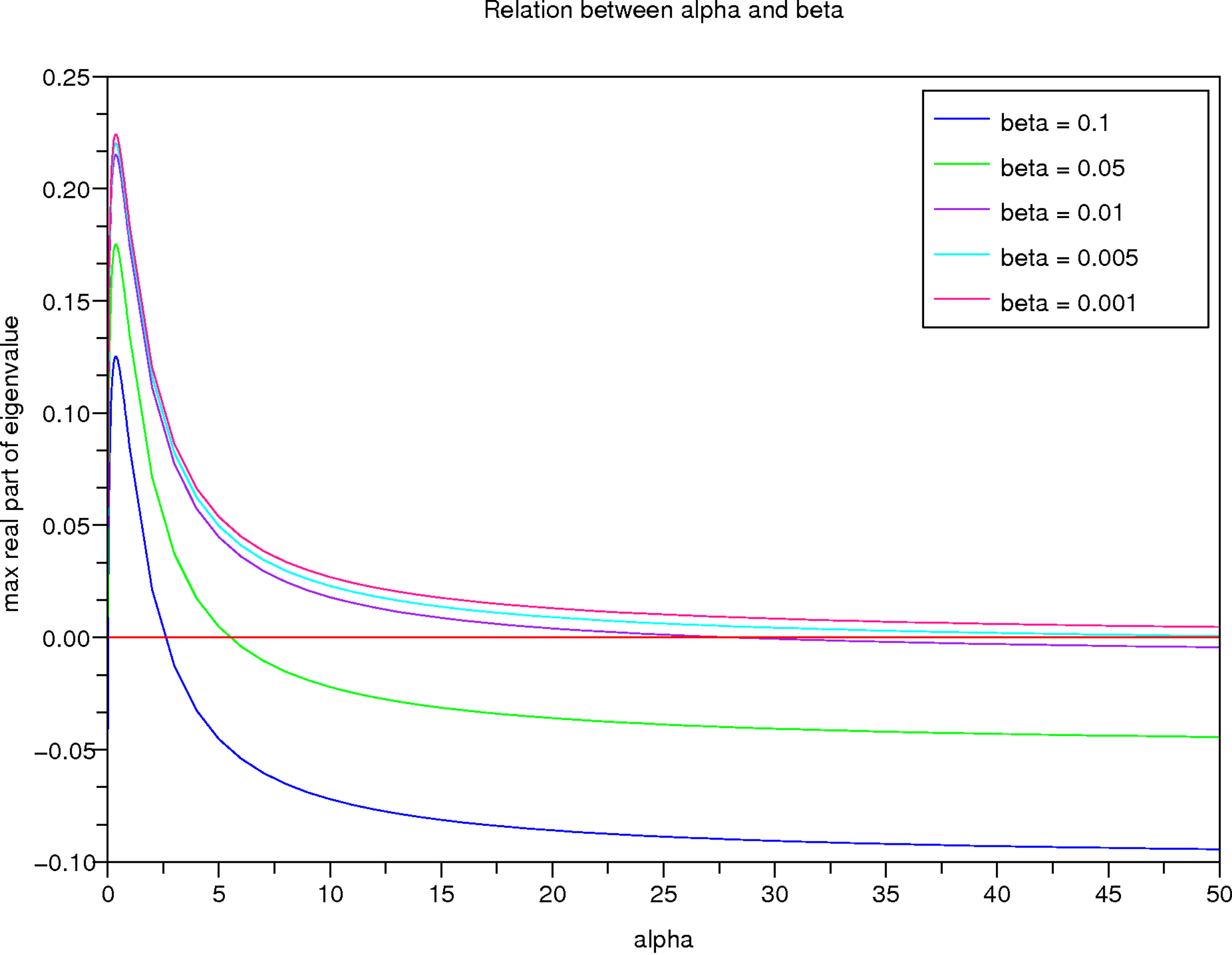}
\end{center}
\caption{Maximum real part of the non-zero eigenvalues of $L$ as $\alpha$ is increased with different values of $\beta$}\label{fig:eig_beta1}
\end{figure}

The numerical simulation of the original full system in \eqref{eqn:fhw1} and \eqref{eqn:fhw2} were carried out using the Fortran 90 code on a grid of size $64 \times 64$.

For the initial condition $\bm{\Phi}^0$, the coefficients of $\Phi^0_{\bm{k}}$ were chosen randomly if $0 \le |k_x|\le 5$ and $1 \le |k_y| \le 5$, otherwise $\Phi^0_{\bm{k}}$ is zero. Furthermore $|\Phi^0_{\bm{k}}| \le 0.01$, $\bm{R}^0 = \bm{\Phi}^0$ and $\Phi^0_{(k_x, k_y)}$ = $\overline{\Phi}^0_{(-k_x, -k_y)}$ (for all $\bm{k}$).

% \subsubsection{Results for large values of $\alpha$}\label{sec:beta1}

Starting with a large value of $\alpha$ and slowly reducing it, we found the first eigenvalues to have zero real components are $\lambda^{+}_{(0, \pm 1)}$ when $\alpha = 281.2475$. (So, $\alpha_u$ =281.2475.)  According to  the centre manifold analysis the reduced system is the following linear system (we are ignoring the $\bm{k} = (0,0)$ mode)
\begin{eqnarray}
\frac{d}{dt}X_{(0, -1)} &=&  \left(7.500\times 10^{-1}i + \epsilon  (-3.556\times 10^{-6} +  1.896\times10^{-8}i)\right)X_{(0,-1)}, \nonumber\\
&& +\epsilon^2 \left(6.321\times10^{-9} - 5.899\times10^{-11}i \right) X_{(0,-1)} ,\label{eqn:beta_01}\\
\frac{d}{dt}X_{(0, 1)} &=&\frac{d}{dt}\overline{X}_{(0, -1)}. \nonumber
%\frac{d}{dt}\bm{X}_{(0, 1)} &=&  \left( -7.500\times10^{-1}i    + \epsilon  (-3.556\times 10^{-6} -  1.896\times10^{-8}i)\right)\bm{X}_{(0,1)} \\
%&& +\epsilon^2 \left(6.321\times10^{-9} + 5.899\times10^{-11}i  \right)\bm{X}_{(0, 1)}.
\end{eqnarray}

The first point to note is that the nonlinear terms in Equation \eqref{eqn:low_order} are zero for this particular example. To see why, firstly observe that the vector $\begin{bmatrix} \bm{\Phi}_X & \bm{R}_X\end{bmatrix}^T = Q^{-1}\begin{bmatrix} \bm{X} & \Psi(\bm{X}, \epsilon)\end{bmatrix}^T$ will be zero everywhere except for those entries corresponding to the $\bm{k} = (0, \pm 1)$ modes. Therefore the inverse Fourier transform of $ \bm{\Phi}_X$ and $\bm{R}_X$ are one dimensional, real, functions of $y$. The Poisson bracket applied to such functions is zero. 

If $\alpha \gtrapprox 281.2475$ ($\epsilon \gtrapprox 0$), the real part of the eigenvalues of Equation \eqref{eqn:beta_01} are negative, indicating that the solution should converge towards zero. The numerical solution of both the reduced and full systems did converge towards zero.

Given the small values of the real components of the eigenvalues of \eqref{eqn:beta_01}, the solutions of the full system should be stable when $\alpha \lessapprox 281.2475$. Figure \ref{fig:beta280} shows the real part of  ${\Phi}_{(0,1)}$ and ${R}_{(0,1)}$ when $\alpha = 280$. 
Note however, as $\alpha$ is decreased further the real components of the eigenvalues will increase, and possibly become positive, so for certain values of $\alpha$ we may see exponential growth in the $\bm{k} = (0, \pm 1)$ modes.

\begin{figure}
\begin{center}
\input{figure/beta_mode_280.tex}
\end{center}
\caption{Real part of  $\bm{\Phi}_{(0,1)}$ and $\bm{R}_{(0,1)}$ for $\alpha$ = 280  and $0 \le t \le 5000$.}\label{fig:beta280}
\end{figure}

%\begin{figure}
%\begin{center}
%\input{figure/beta_mode_280_10.tex}
%\end{center}
%\caption{Real part of  $\bm{\Phi}_{(1, 0)}$, and $\bm{R}_{(1, 0)}$ for $\alpha$ = 280  and $0 \le t \le 5000$.}\label{fig:beta280_10}
%\end{figure}

%\begin{figure}
%\begin{center}
%\input{figure/beta_mode_280_11.tex}
%\end{center}
%\caption{Real part of  ${\Phi}_{(1, 1)}$, and ${R}_{(1, 1)}$ for $\alpha$ = 280  and $0 \le t \le 5000$.}\label{fig:beta280_11}
%\end{figure}

Continuing to reduce $\alpha$ we notice $\Re\left(\lambda^{+}_{(\pm 1, \pm 1)}\right) = 0$ at $\alpha = 83.326665$. %Set \[\bm{X} = \begin{bmatrix} X_{(-1, -1)} & X_{(-1, 1)} & X_{(1, -1)} & X_{(1,1)}\end{bmatrix}^T.\] 
The reduced system is
\begin{eqnarray}
\frac{d}{dt}X_{(-1, -1)} &=& \left(5.000\times 10^{-1}i  + \epsilon  ( -2.400\times 10^{-5} + 3.840\times 10^{-7}i)\right)X_{(-1,-1)}\nonumber\\
&& +\epsilon^2 \left(1.919\times 10^{-7} - 5.375\times 10^{-9}i\right) X_{(-1,-1)} \nonumber\\
&& + \epsilon \left(M_{12}\Psi_N(\bm{X}, \epsilon)\right)_{(-1, -1)}+ \bar F(\bm{X}, \Psi(\bm{X}, \epsilon))_{(-1, -1)},\label{eqn:beta_11}\\
\frac{d}{dt}X_{(-1, 1)} &=& \frac{d}{dt} \overline{X}_{(-1, -1)},\nonumber\\
\frac{d}{dt}X_{(1, -1)} &=& \frac{d}{dt} {X}_{(-1, -1)},\nonumber\\
\frac{d}{dt}X_{(1, 1)} &=& \frac{d}{dt} \overline{X}_{(-1, -1)}.\nonumber
\end{eqnarray}

 To solve Equation \eqref{eqn:beta_11} we used the Scilab ordinary differential equation solver.

For $\epsilon > 0$ the eigenvalues of the linear part of the reduced system have negative real components, so the solution should converge towards zero. The numerical solutions of the reduced system converged to zero for such values of $\epsilon$. 

%\begin{figure}
%\begin{center}
%\includegraphics[height = 12cm, width = 3.5cm, angle = 270]{figure/beta_reduce1_0.pdf}
%\end{center}
%\caption{Real part of $X_{(\pm 1, \pm 1)}$ when $\epsilon = 0.0$. }\label{fig:reduce1_0}
%\end{figure}

\begin{figure}
\begin{center}
\includegraphics[height = 12cm, width = 3.5cm, angle = 270]{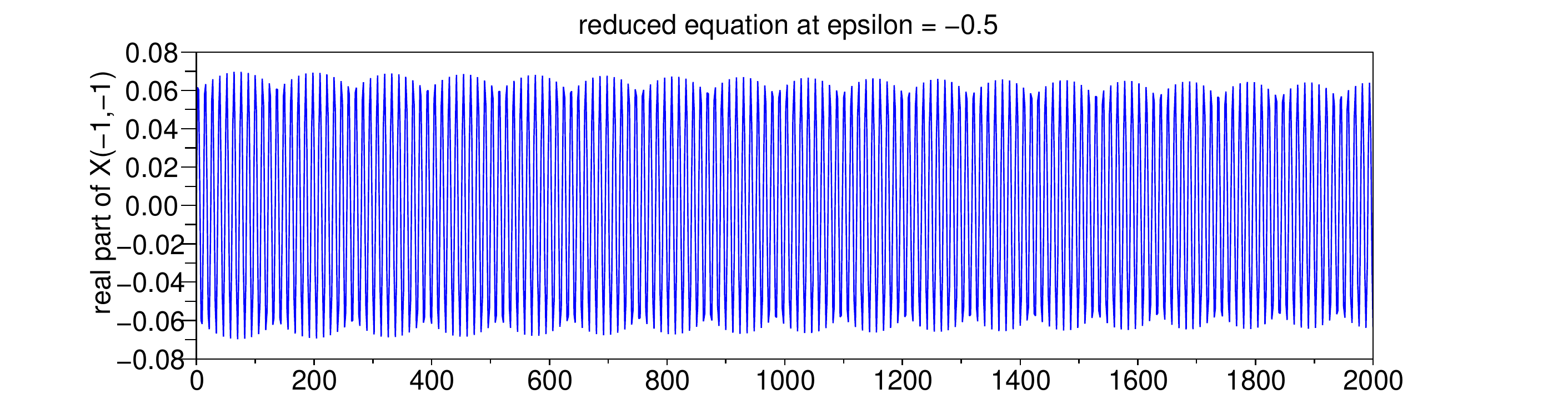}
\end{center}
\caption{Real part of $X_{(\pm 1, \pm 1)}$ when $\epsilon = -0.5$. }\label{fig:reduce1_m}
\end{figure}

 The real part of $X_{(1,1)}$ is shown in 
%Figures \ref{fig:reduce1_0} and \ref{fig:reduce1_m}  for $\epsilon = 0$ and $\epsilon =  -0.5$. 
Figure \ref{fig:reduce1_m} for $\epsilon = -0.5$
The results imply the $\bm{k} = (\pm 1, \pm 1)$ modes are stable when $\alpha \approx  83.326665$, which was the case in the numerical simulations of the full system.
%is the case as shown in Figure \ref{fig:beta83_11}. (The change in behaviour of the $\bm{k} = (\pm 1, \pm 1)$ mode at about $t = 2000$ is consequence of round off error occurring due to the large values of the $\bm{k} = (0, \pm 1)$ mode.)
  
%\begin{figure}
%\begin{center}
%\input{figure/beta_mode_83_11.tex}
%\end{center}
%\caption{Real part of  ${\Phi}_{(1,1)}$, and ${R}_{(1,1)}$ for $\alpha$ = 83  and $0 \le t \le 3000$.}\label{fig:beta83_11}
%\end{figure}

The $\bm{k} = (\pm 1, \pm 1)$ mode may be stable, but according to Equation \eqref{eqn:beta_01} the $\bm{k} = (0, \pm 1)$ mode will grow when $\alpha \approx 83.326665$, as shown in Figure \ref{fig:beta83}. 
%The growth in the $\bm{k} = (0, \pm 1)$ mode eventually dominates the system.

\begin{figure}
\begin{center}
\input{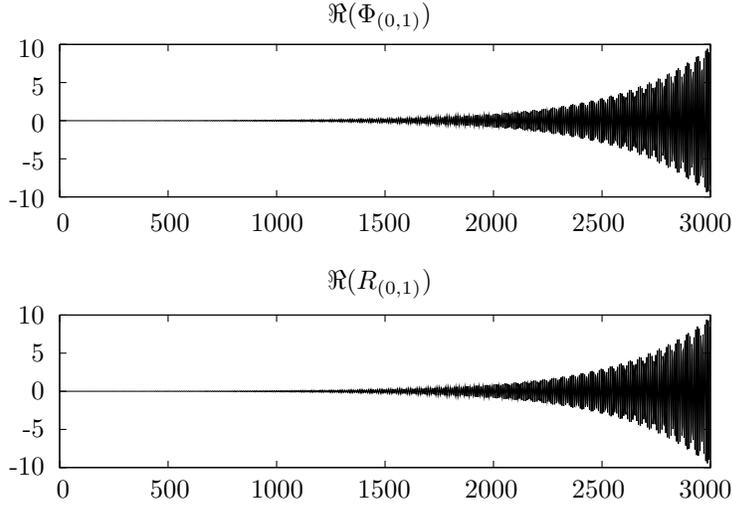}
\end{center}
\caption{Real part of  ${\Phi}_{(0,1)}$, and ${R}_{(0,1)}$ for $\alpha$ = 83  and $0 \le t \le 3000$.}\label{fig:beta83}
\end{figure}

Reducing the value of $\alpha$ further the next eigenvalue to have real zero components is $\lambda^{+}_{(0, \pm 2)}$ at $\alpha =  71.984$. As with the $\bm{k} = (0, \pm 1)$ case, the nonlinear term drops out and the reduced system is
\begin{eqnarray}
\frac{d}{dt}X_{(0, -2)} &=&  \left( 5.999\times 10^{-1}i + \epsilon  ( -5.554\times10^{-5} + 1.482\times 10^{-6}i)\right)X_{(0,-2)} \nonumber\\
&& +\epsilon^2 \left( 6.165\times 10^{-7} - 2.878\times 10^{-8}i\right) X_{(0,-2)}, \\\label{eqn:beta_02}
\frac{d}{dt}X_{(0, 2)} &= & \frac{d}{dt} \overline{X}_{(0, -2)}.\nonumber
\end{eqnarray}

%% %
%% The modes where it crosses the axis are   
 
%%     0.000D+00    0.000D+00  
%%     0.000D+00  - 2.000D+00  
%%     0.000D+00    2.000D+00  
 
%%  The reduced system   
 
%%     0    0                         0                       
%%     0    6.936D-11 + 5.999D-01i    0                       
%%     0    0                         6.936D-11 - 5.999D-01i  
 
%%  X   
 
%%  + epsilon   
%%    1.0D-09 *
 
%%   - 6.661D-07    0                         0                       
%%     0          - 5.554D+04 + 1.482D+03i    0                       
%%     0            0                       - 5.554D+04 - 1.482D+03i  
 
%%  X   
 
%%  + epsilon^2   
 
%%    1.0D-19 *
 
%%     4.090D-14    0                         0                       
%%     0            6.165D+12 - 2.878D+11i    0                       
%%     0            0                         6.165D+12 + 2.878D+11i  
 
%%  X

For negative values of $\epsilon$ ($\alpha <  71.984$) the eigenvalues of \eqref{eqn:beta_02} have positive real components so we expect to see a growth in the $\bm{k} = (0, \pm 2)$ modes.

The next eigenvalue to obtain zero real components is $\lambda^+_{(\pm 1, \pm 2)}$ at $\alpha =  41.64583$. We used the Scilab's ordinary differential equation solver to find the solutions of the reduced system when $\epsilon < 0$.   The results in Figure \ref{fig:reduce2_m} indicate this is a stable mode when $\alpha \approx 41.64583$. Once again, exponential growth in the $\bm{k} = (0, \pm 1)$ mode is evident when $\alpha = 41$.
 
%\begin{figure}
%\begin{center}
%\includegraphics[height = 12cm, width = 3.5cm, angle = 270]{figure/beta_reduce2_0.pdf}
%\end{center}
%\caption{Real part of $X_{(\pm 1, \pm 2)}$ when $\epsilon = 0$. }\label{fig:reduce2_0}
%\end{figure}

\begin{figure}
\begin{center}
\includegraphics[height = 12cm, width = 3.5cm, angle = 270]{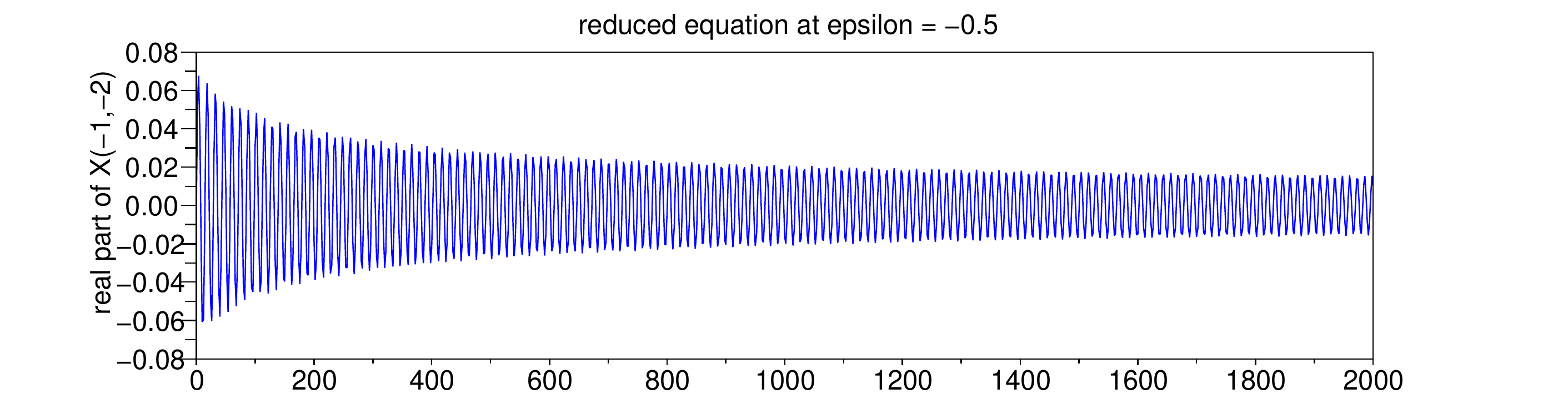}
\end{center}
\caption{Real part of $X_{(\pm 1, \pm 2)}$ when $\epsilon = -0.5$. }\label{fig:reduce2_m}
\end{figure}

%See Figure \ref{fig:beta41} for a plot of the  real part of $\Phi_{(0, 1)}$ and $R_{(0, 1)}$ when $\alpha = 41$. Once again, exponential growth in the $\bm{k} = (0, \pm 1)$ mode is evident.

%\begin{figure}
%\begin{center}
%\input{figure/beta_mode_41.tex}
%\end{center}
%\caption{Real part of  $\Phi_{(0,1)}$, and $R_{(0,1)}$ for $\alpha$ = 41  and $0 \le t \le 1300$.}\label{fig:beta41}
%\end{figure}

It is important to carry out the numerical simulations on appropriate sized grids. For example, if we rerun the previous experiment on a grid of size $32 \times 32$, instead of $64 \times 64$, the $\bm{k} = (0, \pm 1)$ mode appears to be stable, as shown in  Figure \ref{fig:beta41_small}. This is incorrect, the solution should grow exponentially.
% as in Figure \ref{fig:beta41}. 
If we did not have the analytical results and just looked at the results for the $\bm{k} = (0, \pm 1)$ mode shown in Figure \ref{fig:beta41_small}, it would not be obvious that something is wrong. But, the apparent high frequency (noisy) solution  of $\Phi_{(\pm 1, \pm 1)}$ and $R_{(\pm 1, \pm 1)}$ in Figure \ref{fig:beta41_11_small} is an indication of the presence of numerical errors.

\begin{figure}
\begin{center}
\input{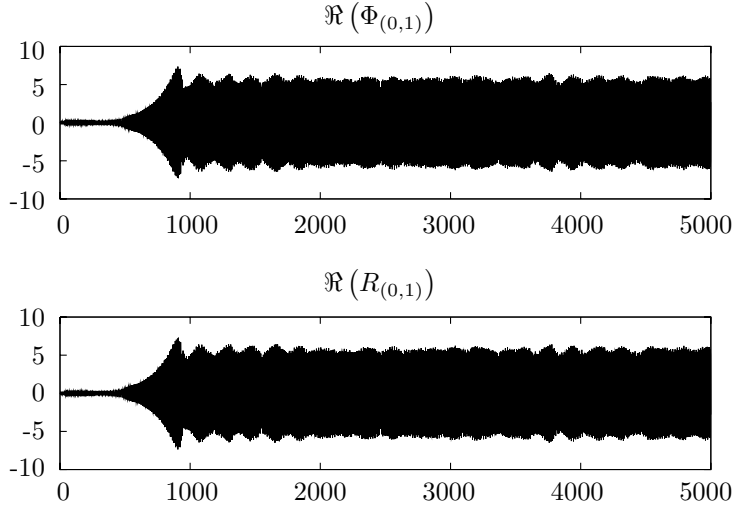}
\end{center}
\caption{Real part of  $\Phi_{(0,1)}$, and $R_{(0,1)}$ for $\alpha$ = 41, $0 \le t \le 5000$ and numerical grid of size $32 \times 32$.}\label{fig:beta41_small}
\end{figure}

\begin{figure}
\begin{center}
\input{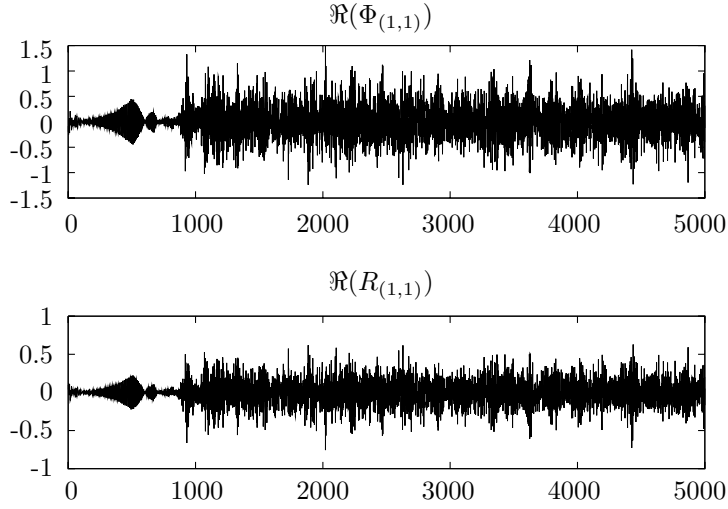}
\end{center}
\caption{Real part of  $\Phi_{(1,1)}$, and $R_{(1,1)}$ for $\alpha$ = 41, $0 \le t \le 5000$ and numerical grid of size $32 \times 32$.}\label{fig:beta41_11_small}
\end{figure}

The above results show how important it is to use a computational grid of the correct size. If the grid is too small the qualitative type of information extracted from the solutions can be very misleading.

% Next $(\pm 2, \pm 2)$  12.310091  
%$(\pm -2, \pm -1)$  $\alpha = 10.39582$  
%$(\pm -1, \pm -4)$ $\alpha =  6.0922228$  
%$(\pm -2, \pm -4)$ $\alpha  = 3.7912636$ 
%$(\pm 3, \pm 2)$  $\alpha = 3.2191634$  
%$(0, \pm 5)$  $\alpha = 3.0786872$  
%$(\pm 3, \pm 3)$   $\alpha = 2.8662482$  
%$(\pm 2, \pm 5)$   $\alpha = 1.9399024$  
%$(\pm 3, \pm 4)$   $\alpha = 1.9256397$ 

Reducing $\alpha$ even further, the next eigenvalues to have zero real components are  $\lambda^{+}_{(0, \pm 3)}$ when  $\alpha = 20.20947$, $(\pm 1, \pm 3)$ when $\alpha = 15.168627$, $(\pm 2, \pm 2)$  at $\alpha = 12.310091$, $(\pm 2, \pm 1)$ at $\alpha = 10.39582$ etc. In other words, as $\alpha$ is reduced more modes influence the long term behaviour of the system. The size of the computational grid must be big enough to ensure these modes are accurately represented in the numerical computations.

\subsection{Solution of a model problem}\label{sec:period_model}

The experiments in Section \ref{sec:eg_beta} focussed on one particular mode at a time.  Here we concentrate on a choice of parameters where a large number of modes are expected to contribute to the long term behaviour of the system. In particular, we set $p$ = 2, $\alpha$ = 1 and $\kappa$ = 1. The experiments were run using different values of $\beta$ ($\beta = 10^{-2}$, $\beta = 10^{-3}$ and $\beta = 10^{-5}$). We also tried computational grids of size $64 \times 64$ to $256 \times 256$ to ensure that results are consistent. 

We know, analytically, that  if the initial conditions $\bm{\Phi}^*$ and $\bm{R}^*$ are chosen so that $\Phi^*_{\bm{k}}$ and $\bm{R}^*_{\bm{k}}$ are zero everywhere except the $\bm{k} = (0, \pm 1)$ mode, the solution will grow exponentially. This is also evident in the numerical simulations. Is this behaviour restricted to very specific choices of initial conditions? Or is it an indication of what we should expect in general?

Lets consider another set of initial conditions $\bm{\Phi}^0$ and $\bm{R}^0$ where the coefficients of $\Phi^0_{\bm{k}}$ are chosen randomly if $0 \le |k_x|\le 5$ and $2 \le |k_y| \le 5$; and $\Phi^0_{(0, \pm 1)}=0.1$. Otherwise $\Phi^0_{\bm{k}}$ is zero. Furthermore, the values are scaled if $\bm{k} \ne (0, \pm 1)$ so that $|\Phi^0_{\bm{k}}| \le \gamma$ . For small values of $\gamma$ the initial condition is close to $\bm{\Phi}^*$ and $\bm{R}^*$. Finally $\bm{R}^0 = \bm{\Phi}^0$ and $\Phi^0_{(k_x, k_y)}$ = $\overline{\Phi}^0_{(-k_x, -k_y)}$. 

We found that for $\gamma = 10^{-6}$ and $\gamma = 10^{-4}$ the solution showed exponential growth for all values of $\beta$. In other words, any initial conditions close to $\bm{\Phi}^*$ and $\bm{R}^*$ appear to be unstable.

When $\gamma = 10^{-2}$ the solution once again showed exponential growth for $\beta = 10^{-2}$, but it behaved differently when $\beta \le 10^{-3}$. As shown in Figure \ref{fig:periodic}, the $\bm{k} = (0, \pm 1)$ modes initially appeared to be stable, until about $t = 300$ when they once again grow exponentially. The growth appears to be primarily in the  $\bm{k} = (0, \pm 1)$ modes as no growth is was evident in the  $\bm{k} = (\pm 1, \pm 1)$ modes. 
%See Figure \ref{fig:periodic_11}.

\begin{figure}
\begin{center}
\input{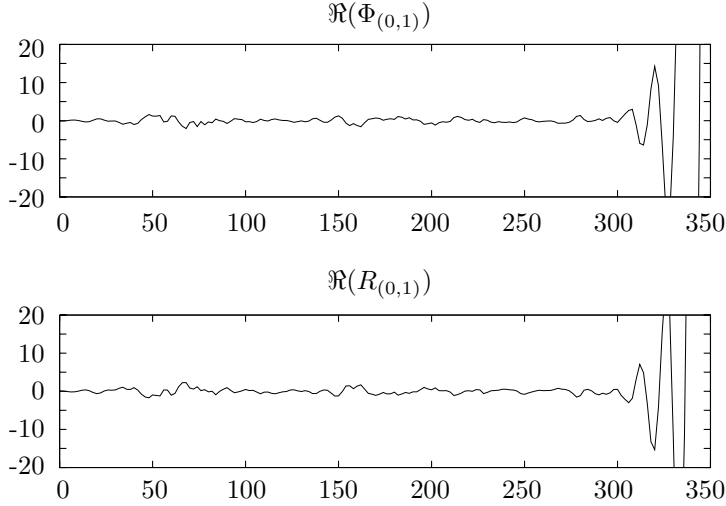}
\end{center}
\caption{Real part of  ${\Phi}_{(0, 1)}$, and ${R}_{(0, 1)}$ for $\gamma = 10^{-2}$  and $0 \le t \le 350$.}\label{fig:periodic}
\end{figure}

%\begin{figure}
%\begin{center}
%\input{figure/init_periodic_10.tex}
%\end{center}
%\caption{Real part of  ${\Phi}_{(1, 0)}$, and ${R}_{(1, 0)}$ for $\gamma = 10^{-2}$  and $0 \le t \le 350$.}\label{fig:periodic_10}
%\end{figure}

%\begin{figure}
%\begin{center}
%\input{figure/init_periodic_11.tex}
%\end{center}
%\caption{Real part of  ${\Phi}_{(1, 1)}$, and ${R}_{(1, 1)}$ for $\gamma = 10^{%-2}$  and $0 \le t \le 350$.}\label{fig:periodic_11}
%\end{figure}

We also tried other initial conditions and obtained similar results to those mentioned above. We thus conclude that for periodic boundary conditions, the nodes sitting on the $(0, \pm k_y)$ boundary will always grow for certain choices of $\alpha$.

\section{Application of centre manifold analysis  - zero boundary conditions}\label{sec:eg_zero}

In Section \ref{sec:eg} we considered the case of periodic boundary conditions, in this section we assume that the $y$ boundaries are zero and the $x$ boundaries are periodic. Zero boundary conditions can be enforced by the use of $\sin$ transformations, which may in-turn be implemented through Fourier transformations, see \cite{Boyd:2001}. To implement a $\sin$ transformation of size $N$ we need to use a Fourier grid of size $2N$. 

When zero boundary conditions are applied along the $y$-axis the $\bm{k} = (0, k_y)$ modes will always be zero. We saw in Section \ref{sec:eg} that these modes played a special role.  We expect to see a marked changed in the behaviour of the system when  zero boundary conditions are enforced.

To find the initial conditions we firstly evaluate \[\bm{p} =  \sum_{k=1}^4\sum_{a,b=0}^{m-1} \gamma_k\eta_k \sin(2\pi k a/m)\sin(2\pi k b/m)+\sum_{a=0}^{m-1} \cos(2\pi k a/m)\] where $\gamma_k$ and $\eta_k$ are chosen randomly. We then set $\bm{\Phi} = s {\cal F}^{-1} \bm{p}/\| {\cal F}^{-1} \bm{p}\|_{1}$ where $\cal F$ is the Fourier transformation, $\|.\|_1$ is the L1 norm and $s$ is a scaling term.  Furthermore $\bm{R}$ = $\bm{\Phi}$ and $\Phi_{(k_x, k_y)}$ = $\overline{\Phi}_{(-k_x, -k_y)}$.

\subsection{Individual mode study}\label{sec:eg_beta_zero}

In this section we use the same parameters as in Section \ref{sec:eg_beta}, but apply zero boundary conditions along the $y$-axis. The numerical calculations were carried out on a grid of size $128 \times 128$. The scaling parameter $s$ is 0.01.

When $\alpha > 83.326665$ the real part of all of the eigenvalues of $L$ are negative, so the solution should converge to zero. The numerical simulations converged to zero.

%The plots of the real part of  ${\Phi}_{(1, 1)}$, and ${R}_{(1, 1)}$ when $\alpha$ = 180 are shown in Figure \ref{fig:zero180_11}. 

%\begin{figure}
%\begin{center}
%\input{figure/zero_beta_180_11.tex}
%\end{center}
%\caption{Real part of  ${\Phi}_{(1, 1)}$, and ${R}_{(1, 1)}$ for $\alpha$ = 180  and $0 \le t \le 5000$.}\label{fig:zero180_11}
%\end{figure}

We know that $\Re\left(\lambda^{+}_{(\pm 1. \pm 1)}\right) = 0$ at $\alpha =  83.326665$.
The solution of the reduced system shown in 
%Figures \ref{fig:reduce1_0} and 
Figure \ref{fig:reduce1_m} suggests the $\bm{k} = (\pm 1, \pm 1)$ modes are stable when $\alpha \approx  83.326665$. The modes were stable in the numerical simulations.
%Evidence of such stable solutions is shown in Figure \ref{fig:zero83_11}. 

%\begin{figure}
%\begin{center}
%\input{figure/zero_beta_83_11.tex}
%\end{center}
%\caption{Real part of  ${\Phi}_{(1, 1)}$, and ${R}_{(1, 1)}$ for $\alpha$ = 83  and $0 \le t \le 5000$}.\label{fig:zero83_11}
%\end{figure}

%In order to obtain correct solutions the computational grid must be of the right size. For example, we reran the above experiment on a smaller grid and as shown in Figure \ref{fig:zero83_11_small} the behaviour of the solution changed dramatically. 

%\begin{figure}
%\begin{center}
%\input{figure/zero_beta_83_11_small.tex}
%\end{center}
%\caption{Real part of  ${\Phi}_{(1, 1)}$, and ${R}_{(1, 1)}$ for $\alpha$ = 83, $0 \le t \le 5000$ and numerical grid of size $64 \times 64$.}\label{fig:zero83_11_small}
%\end{figure}

%\begin{figure}
%\begin{center}
%\includegraphics[height = 12cm, width = 3.5cm, angle = 270]{figure/zero_reduce1_m10.pdf}
%\end{center}
%\caption{Real part of $X_{(\pm 1, \pm 1)}$ when $\epsilon = -10.0$. }\label{fig:zero_reduce1_m5}
%\end{figure}

However, we see from Equation \eqref{eqn:beta_11} that as $\alpha$ is decreased the real part of the eigenvalues of the linear system increase, and as shown in Figure \ref{fig:zero70_11} this growth may dominate the system. Such growth in the $\bm{k} = (\pm 1, \pm 1)$ modes was also confirmed by numerical solutions of the reduced system.

\begin{figure}
\begin{center}
\input{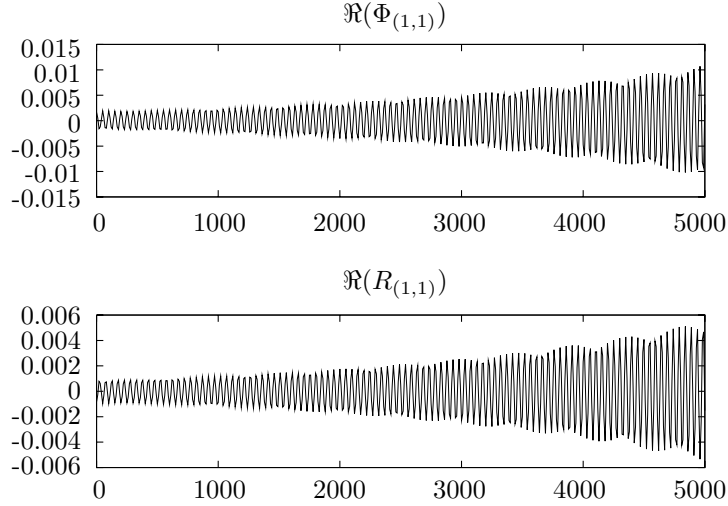}
\end{center}
\caption{Real part of  ${\Phi}_{(1, 1)}$ and ${R}_{(1, 1)}$ for $\alpha$ = 70  and $0 \le t \le 5000$.}\label{fig:zero70_11}
\end{figure}  

 Although the results for $\alpha = 70$ suggests the $\bm{k} = (\pm 1, \pm 1)$ will always grow exponentially, the plot for $\alpha = 41$ shown in Figure \ref{fig:zero41_11} contradicts such an assumption. We do not believe that the change in behaviour at about $t = 3000$ is a result of numerical error, but rather an example of the stable bifurcation described in Section \ref{sec:zero_model}. It was necessary to increase the size of computational domain to $512 \times 512$ to accurately capture the results as many of the higher order modes grew in the region where the system suddenly changed its behaviour. For relevance to the discussion in Section \ref{sec:zero_model}, we have also included a plot of the $\bm{k} = (\pm 1, 0)$ modes in Figure \ref{fig:zero41_10}.

\begin{figure}
\begin{center}
\input{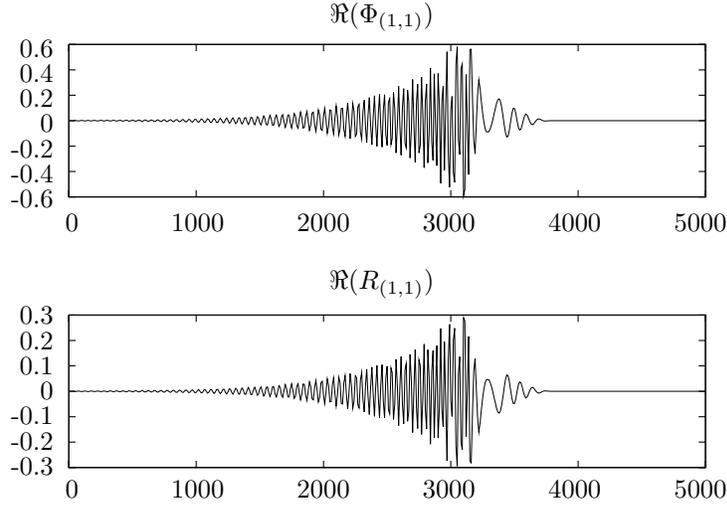}
\end{center}
\caption{Real part of  ${\Phi}_{(1, 1)}$ and ${R}_{(1, 1)}$ for $\alpha$ = 41  and $0 \le t \le 5000$.}\label{fig:zero41_11}
\end{figure}  

\begin{figure}
\begin{center}
\input{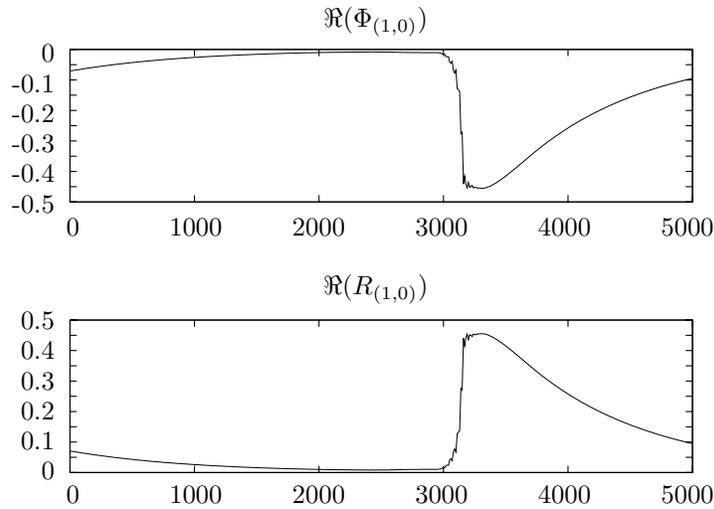}
\end{center}
\caption{Real part of  ${\Phi}_{(1, 0)}$ and ${R}_{(1, 0)}$ for $\alpha$ = 41  and $0 \le t \le 5000$.}\label{fig:zero41_10}
\end{figure}

\subsection{Solution of a model problem}\label{sec:zero_model}

 The model problem under consideration here is the same as the one in Section \ref{sec:period_model}, but with the $y$-axis set to zero. Consequently, the $(0, \pm 1)$ modes that showed exponential growth in Section \ref{sec:period_model} have been removed from the system. The initial conditions are the same as those mentioned in the start of Section \ref{sec:eg_zero}.

% NOTE TO SELF: Why the bubble shape of the plots for the $\bm{k} = (\pm 1, \pm 1) mode? These modes are interacting with one another through the $\bm{k} = (\pm 2, 0)$ modes. The imaginary values of the  $\bm{k} = (\pm 2, 0)$ modes is large. Have also make a copy of the data files for the  $\bm{k} = (\pm 2, 0)$ modes and they are stored in the figure directory.

The properties of the solution varied greatly with $\beta$. Figure \ref{fig:zero1D2_11} shows the real part of  ${\Phi}_{(1, 1)}$ and ${R}_{(1, 1)}$ for $\beta = 0.01$. According to the solution of the corresponding reduced system, the $\bm{k} = (\pm 1, \pm 1)$ mode should be stable, which is in agreement with the numerical results shown in Figure \ref{fig:zero1D2_11}.

\begin{figure}
\begin{center}
\input{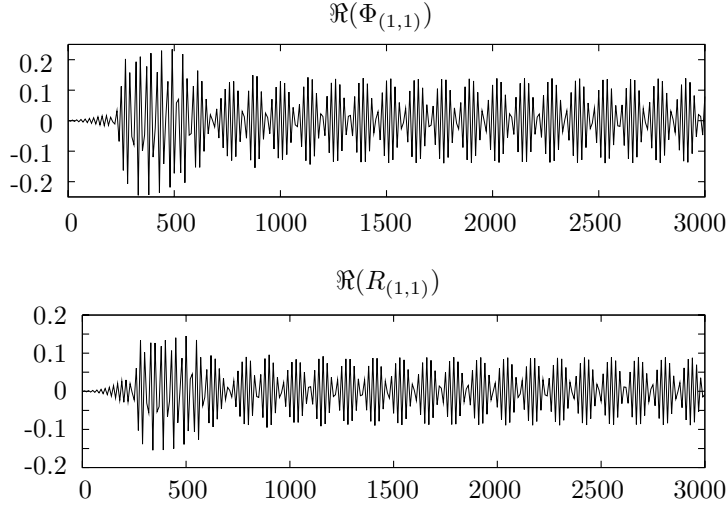}
\end{center}
\caption{Real part of  ${\Phi}_{(1, 1)}$ and ${R}_{(1, 1)}$ for $\beta = 0.01$  and $0 \le t \le 3000$.}\label{fig:zero1D2_11}
\end{figure}  

%\begin{figure}
%\begin{center}
%\input{figure/init_zero_1D2_10.tex}
%\end{center}
%\caption{Real part of  ${\Phi}_{(1, 0)}$ and ${R}_{(1, 0)}$ for $\beta = 0.01$  and $0 \le t \le 3000$.}\label{fig:zero1D2_10}
%\end{figure} 

%\begin{figure}
%\begin{center}
%\input{figure/init_zero_5D2_11.tex}
%\end{center}
%\caption{Real part of  ${\Phi}_{(1, 1)}$ and ${R}_{(1, 1)}$ for $\alpha = 1.0$, $\beta = 0.05$  and $0 \le t \le 3000$.}\label{fig:zero5D2_11}
%\end{figure}  

%\begin{figure}
%\begin{center}
%\input{figure/init_zero_5D2_10.tex}
%\end{center}
%\caption{Real part of  ${\Phi}_{(1, 0)}$ and ${R}_{(1, 0)}$ for $\beta = 0.05$  and $0 \le t \le 3000$.}\label{fig:zero5D2_10}
%\end{figure}  

 Figure \ref{fig:zero1D3_11} shows the real part of  ${\Phi}_{(1, 1)}$ and ${R}_{(1, 1)}$ for $\beta = 0.001$. This time the solution of the reduced system shows a growth in the $\bm{k} = (\pm 1, \pm 1)$ modes.  We see from Figures \ref{fig:zero1D3_11} and \ref{fig:zero1D3_10} that the $(\pm 1, \pm 1)$ and $(\pm 1, 0)$ modes interact in such as way as to result in stable bifurcation.

\begin{figure}
\begin{center}
\input{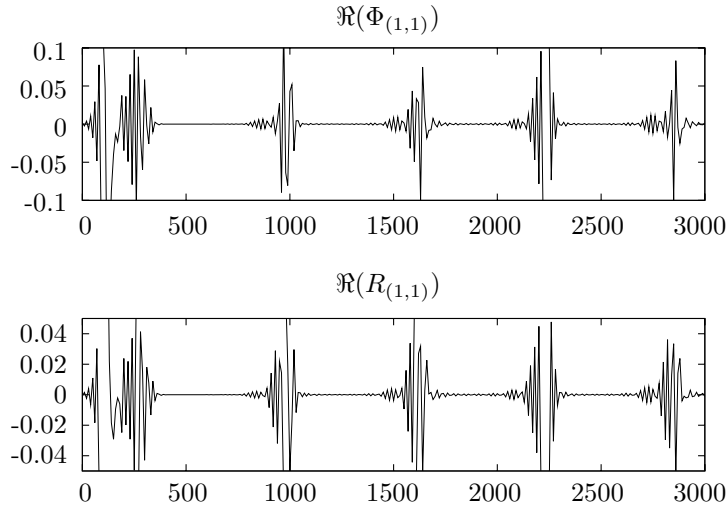}
\end{center}
\caption{Real part of  ${\Phi}_{(1, 1)}$ and ${R}_{(1, 1)}$ for $\beta = 0.001$  and $0 \le t \le 3000$.}\label{fig:zero1D3_11}
\end{figure} 

\begin{figure}
\begin{center}
\input{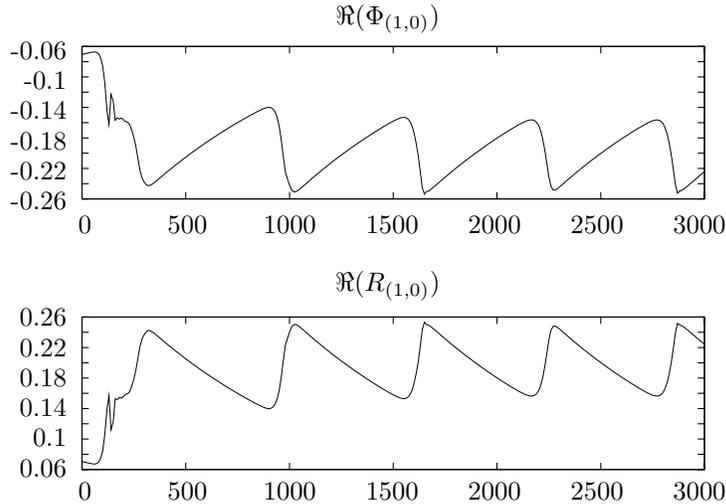}
\end{center}
\caption{Real part of  ${\Phi}_{(1, 0)}$ and ${R}_{(1, 0)}$ for $\beta = 0.001$  and $0 \le t \le 3000$.}\label{fig:zero1D3_10}
\end{figure}  

\section{Conclusion}

Our initial motivation for using the centre manifold theory to study the nonlinear behaviour of Hasegawa--Wakatani equations was to verify the results of our code. The theory proved to be a useful tool in predicting and explaining the behaviour of the results. It also highlighted some  unexpected properties of the Hasegawa-Wakatani equations.

We showed that it is important to carry out the computations on the correct sized grids, otherwise the qualitative behaviour of the solution will be wrong. One advantage of working in the Fourier space is that it is easy to see when a solution is wrong. A noisy solution like the one shown in Figure \ref{fig:beta41_11_small} is a strong indication of numerical errors. The author is not aware of any study that has been carried out to determine what are appropriate sized grids for finite element or finite difference discretisations.

We presented a number of different examples of unstable solutions that arose when using periodic boundary conditions.  Some of our results directly contradict the stable solutions reported in other papers. The experiments presented here suggest that any numerical simulation showing stable solutions should be checked to  verify they are not a consequence of numerical errors or a consequence of terminating the simulations too soon. 

We showed examples of stable bifurcation behaviour occurring when the boundary along the $y$-axis was set to zero.  Our future work will focus on better understanding how the different parameters influence the bifurcation behaviour. We intend to carry out this analysis by once again using the centre manifold analysis, but this time projecting down onto larger subspaces containing more modes of interest, as well as continuing to verify our results by numerical solutions on the full system of equations. 

\bibliographystyle{siam}
\bibliography{nonlinear}

\end{document}